\newcounter{cnstcnt}
\title{Blow-up prevention by sub-logistic sources in 2d Keller-Segel system }
\author{Minh Le }
\date{\today}
\begin{document}
\maketitle
\begin{abstract}
This paper investigates the global existence of solutions to Keller-Segel systems with sub-logistic sources using the test function method. Prior work by \cite{Tian1} demonstrated that sub-logistic sources $f(u)=ru -\mu \frac{u^2}{\ln^p(u+e)}$ with $p\in(0,1)$ can prevent blow-up solutions for the 2D minimal Keller-Segel chemotaxis model. Our study extends this result by showing that when $p=1$, sub-logistic sources can still prevent the occurrence of finite time blow-up solutions. Additionally, we provide a concise proof for a result previously proven in \cite{Cao} that the equi-integrability of $\left \{ \int_\Omega u^{\frac{n}{2}}(\cdot,t) \right \}_{t\in (0,T_{\rm max})}$ can avoid blow-up.
\end{abstract}
\numberwithin{equation}{section}
\newtheorem{theorem}{Theorem}[section]
\newtheorem{lemma}[theorem]{Lemma}
\newtheorem{remark}{Remark}[section]
\newtheorem{Prop}{Proposition}[section]
\newtheorem{Def}{Definition}[section]
\newtheorem{Corollary}{Corollary}[theorem]
\allowdisplaybreaks
\section{Introduction} \label{intro}
In this paper, we consider the following chemotaxis model with sub-logistic sources in a smooth bounded domain $\Omega \subset \mathbb{R}^n$, where $n \geq 2$:
\begin{equation} \label{5.e.p.sub-logistic}
    \begin{cases}
        u_t = \Delta u -\nabla \cdot (u \nabla v)+ f(u)\\
        0 = \Delta v +u-v,
    \end{cases}
\end{equation}
where $r,\mu$ are positive parameters, and $f$ is a smooth function generalizing the sub-logistic and signal production source respectively,  
\begin{align} \label{logistic}
    f(u) = ru-\mu \frac{u^2}{\ln^p(u+e)} ,\quad \text{with } r\in \mathbb{R}, \mu>0, \text{and } p>0.
\end{align}
The system \eqref{5.e.p.sub-logistic} is complemented with nonnegative initial conditions in $W^{1,\infty}(\Omega)$ not identically zero:
\begin{align} \label{initial-data}
    u(x,0)=u_0(x), \qquad v(x,0)=v_0(x), \qquad \text{with } x\in \mathbb{R},
 \end{align}
and homogeneous Neumann boundary condition are imposed as follows:
\begin{equation} \label{boundary-data}
    \frac{\partial u}{\partial \nu } = \frac{\partial v}{\partial \nu }  = 0, \qquad x\in \partial \Omega,\, t \in (0, T_{\rm max}),
\end{equation}
where $\nu$ denotes the outward normal vector. \\
The study of chemotaxis, which is the phenomenon where cells or bacteria move towards a chemical signal, has been a topic of intense research since the 1970s. Chemotaxis plays a significant role in various fields, including predicting the formation of aggregations, navigating optimal paths in a complex network, and even in physics, such as particle interaction. Moreover, it presents an intriguing mathematical property known as the critical mass phenomenon. This phenomenon means that if the mass is strictly less than a certain number, solutions exist globally, while if the mass is strictly larger than that number, solutions blow up in finite time. When $f \equiv 0$, it was shown in \cite{NSY} that the critical mass equals $4\pi$ when $\Omega = B(0,1)$ and $8\pi$ when the initial data are non-negative and radial. However, in higher dimensions, this property no longer holds. Recent research, as reported in \cite{Winkler-2010}, has shown that a finite blow-up solution can be constructed in a smooth bounded domain, regardless of how small the mass is. \\
The logistic sources, $f(u):= ru-\mu u^2$, was introduced and studied in \cite{Tello+Winkler} that if $\mu >\frac{n-2}{n}$ then solutions exist globally and are bounded at all time in a convex open bounded domain $\Omega \subset \mathbb{R}^n$ where $n \geq 2$. In order word, if $\mu$ is sufficiently large, then the quadratic term $-\mu u^2$ ensures no occurrence of blow-up solutions in two spacial dimensional domain. This leads to a natural question that whether the term "$-\mu u^2$" is optimal to prevent blow-up solutions. However, it has been discovered in \cite{Tian1} that the answer is negative. To be specific, the "weaker" term $-\frac{\mu u^2}{\ln^p(u+e)}$ for $0<p<1$ is sufficient to avoid blow-up solutions for both elliptic-parabolic and fully parabolic minimal Keller-Segel chemotaxis models in a two spacial dimensional domain.
Our main work improve the previous finding by showing that $p=1$ can prevent blow-up solutions of the system \eqref{5.e.p.sub-logistic}. \\
Our analysis relies on a test function method and Moser iteration technique. It is proved in \cite{Cao} that if the family of $\left \{ \int_\Omega u^{\frac{n}{2}}(\cdot,t) \right \}_{t \in (0, T_{\rm max})}$ is equi-integrable, then solutions of \eqref{5.e.p.sub-logistic} when $f \equiv 0$ exist globally and remain bounded at all time. In this paper, we give another shorter proof in Proposition \ref{5.Pop.paper3} for that result as well as indicate that the equi-integrability is not optimal to prevent blow-up thank to de la Vallée-Poussin Theorem. Thereafter, we try to a find a suitable functional and establish a differential inequality to obtain a priori estimate for solutions of \eqref{5.e.p.sub-logistic} thank to the presence of the sub-logistic quadratic degradation term "$-\mu \frac{u^2}{\ln(u+e)}$". Notice that the key milestone in this study is the choice of the following functional:
\begin{equation} \label{lnln-func}
    y(t) = \int_\Omega u(\cdot,t) \ln (\ln (u(\cdot,t)+e)) .
\end{equation}
One can also try to examine a functional 
\[
y_k(t) = \int_\Omega u(\cdot,t) \ln^k(u(\cdot,t)+e)
\]
to find an appropriate $k$, however, there is no suitable $k$ satisfying the conditions that $\mu$ can be arbitrary small. In order word, this method leads to the choice of $k$, but it does require the largeness assumption for $\mu$. So the functional \eqref{lnln-func} enables us to overcome that obstacle to prove our main theorem as follows:

\begin{theorem} \label{5.thm.paper3}
Let $\mu>0$, and $\Omega \subset \mathbb{R}^2 $ be a bounded domain with smooth boundary. The system \eqref{5.e.p.sub-logistic} under the assumptions \eqref{logistic}, \eqref{initial-data}, and \eqref{boundary-data} admits a global bounded solution in $\Omega\times( 0,\infty)$ .
\end{theorem}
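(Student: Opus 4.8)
The plan is to reduce Theorem \ref{5.thm.paper3} to Proposition \ref{5.Pop.paper3}: I will show that the sub-logistic source forces the family $\{u(\cdot,t)\}_{t\in(0,T_{\max})}$ to be equi-integrable in $L^1(\Omega)$, which for $n=2$ is precisely the hypothesis of that proposition (there $u^{n/2}=u$). First I would record the standard preliminaries: \eqref{5.e.p.sub-logistic}--\eqref{boundary-data} possesses a unique nonnegative classical solution $(u,v)$ on a maximal interval $[0,T_{\max})$ with the extensibility criterion $\|u(\cdot,t)\|_{L^\infty(\Omega)}\to\infty$ as $t\uparrow T_{\max}<\infty$; $v\ge 0$ follows from $-\Delta v+v=u\ge 0$ and the maximum principle; and $\int_\Omega u(\cdot,t)\le M$ for all $t$, obtained from $\frac{d}{dt}\int_\Omega u=\int_\Omega f(u)$ together with the elementary bound $f(s)\le C_1-C_2s$ (valid since $s^2/\ln(s+e)$ is superlinear) and an ODE comparison. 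I would also note that $f$ is bounded above on $[0,\infty)$, which is all that the conclusion of Proposition \ref{5.Pop.paper3} needs in order to tolerate the presence of a source.

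The heart of the argument is a differential inequality for $y(t)=\int_\Omega u\ln\ln(u+e)=\int_\Omega\Phi(u)$, with $\Phi(s):=s\ln\ln(s+e)$. One checks directly that $\Phi$ is convex on $[0,\infty)$ — indeed $\Phi''(s)>0$ because $(s+2e)\ln(s+e)\ge s$ — and superlinear. Testing the first equation of \eqref{5.e.p.sub-logistic} by $\Phi'(u)$ and integrating by parts with the Neumann condition gives
\[
y'(t)=-\int_\Omega\Phi''(u)|\nabla u|^2+\int_\Omega u\Phi''(u)\,\nabla u\cdot\nabla v+\int_\Omega\Phi'(u)f(u).
\]
Introducing $\Psi$ with $\Psi'(s)=s\Phi''(s)$ and $\Psi(0)=0$ (so $\Psi\ge 0$ since $\Phi''>0$), the chemotactic term equals $\int_\Omega\nabla\Psi(u)\cdot\nabla v=-\int_\Omega\Psi(u)\Delta v=\int_\Omega\Psi(u)(u-v)\le\int_\Omega u\Psi(u)$ because $v,\Psi(u)\ge 0$; dropping the nonpositive dissipation term then yields
\[
y'(t)\le\int_\Omega u\Psi(u)+r\int_\Omega u\Phi'(u)-\mu\int_\Omega\frac{u^2\Phi'(u)}{\ln(u+e)}.
\]
The decisive observation is the asymptotics $\Psi(s)=O\!\big(s/\ln(s+e)\big)$, while $\Phi'(s)\ge\ln\ln(s+e)\to\infty$; hence for $s$ large (a threshold $K=K(\mu)$) one has $s\Psi(s)\le\tfrac{\mu}{2}\,s^2\Phi'(s)/\ln(s+e)$, so that $u\Psi(u)-\tfrac{\mu}{2}u^2\Phi'(u)/\ln(u+e)$ is bounded above pointwise by a constant. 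This is exactly where the double logarithm in \eqref{lnln-func} is essential: the extra factor $\ln\ln(u+e)$ carried by the degradation term defeats the chemotactic production for \emph{every} $\mu>0$, which is why the borderline exponent $p=1$ can be reached with no largeness assumption on $\mu$, whereas the family $\int_\Omega u\ln^k(u+e)$ only closes when $\mu$ is large.

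It then remains to close the inequality. Using $\frac{u^2}{(u+e)\ln(u+e)}\le u$ and the mass bound gives $r\int_\Omega u\Phi'(u)\le r_+y(t)+r_+M$; splitting $\Omega$ into $\{u<K'\}$ and $\{u\ge K'\}$ and using $u/\ln(u+e)\to\infty$ gives $\tfrac{\mu}{2}\int_\Omega u^2\Phi'(u)/\ln(u+e)\ge(r_++1)\big(y(t)-|\Omega|\Phi(K')\big)$ for $K'=K'(\mu)$ large. Combining, $y'(t)\le -y(t)+C_0$ with $C_0$ independent of $t$, hence $y(t)\le\max\{y(0),C_0\}$ on $(0,T_{\max})$ by an ODE comparison. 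Since $\Phi$ is convex and superlinear, the de la Vallée-Poussin theorem converts this uniform bound into equi-integrability of $\{u(\cdot,t)\}_{t\in(0,T_{\max})}$, i.e. of $\{u^{n/2}(\cdot,t)\}$ for $n=2$; Proposition \ref{5.Pop.paper3} then forces $T_{\max}=\infty$ and global boundedness. I expect the main technical obstacle to be the bookkeeping behind the pointwise functional inequalities — explicitly estimating $\Phi'$, $\Phi''$ and $\Psi$ and verifying the comparisons with the correct thresholds $K(\mu)$, $K'(\mu)$ — together with confirming that the bounded-above source does not disturb the Moser-iteration argument underlying Proposition \ref{5.Pop.paper3}; everything else is routine.
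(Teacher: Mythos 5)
Your proposal is correct and follows essentially the same route as the paper: the same functional $y(t)=\int_\Omega u\ln\ln(u+e)$, the same treatment of the chemotactic term via the antiderivative $\Psi$ with $\Psi'(s)=s\Phi''(s)$ and the elliptic equation, the same decisive comparison $\Psi(s)=O(s/\ln(s+e))$ versus $s\Phi'(s)/\ln(s+e)\gtrsim s\ln\ln(s+e)/\ln(s+e)$, and the same conclusion via Proposition \ref{5.Pop.paper3} and Moser iteration. The only (harmless) cosmetic difference is your detour through de la Vall\'ee-Poussin: the paper applies Proposition \ref{5.Pop.paper3} directly with $G(s)=s\ln\ln(s+e)$, no convexity or equi-integrability needed.
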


\section{Preliminaries}
The local existence and uniqueness of non-negative classical solutions to the system \eqref{5.e.p.sub-logistic} can be established by adapting and adjusting the fixed point argument and standard parabolic regularity theory. For further details, we refer the reader to \cite{Winkler-Horstmann, Tello+Winkler, Lankeit-2017}. For convenience, we adopt Lemma 4.1 from \cite{MW2022}.
\begin{lemma} \label{local-existence}
    Let $\Omega \subset \mathbb{R}^n$, where $n \geq 2$ be a bounded domain with smooth boundary, and suppose $r\in \mathbb{R}$, $\mu>0$, the conditions \eqref{initial-data}, and \eqref{boundary-data} hold. Then there exist $T_{\rm max}\in (0,\infty]$ and functions 
    \begin{equation}
        \begin{cases}
            u \in C^0 \left ( \Bar{\Omega}\times (0,T_{\rm max}) \right ) \cap C^{2,1} \left ( \Bar{\Omega}\times (0,T_{\rm max}) \right )\text{ and} \\
            v \in \bigcap_{q>2} C^0 \left ( [0,T_{\rm max}); W^{1,q}(\Omega
            ) \right )\cap C^{2,1} \left ( \Bar{\Omega}\times (0,T_{\rm max}) \right )
        \end{cases}
    \end{equation}
    such that $u>0$ and $v>0$ in $\Bar{\Omega}\times (0,\infty)$, that $(u,v)$ solves \eqref{5.e.p.sub-logistic} classically in $\Omega \times (0,T_{\rm max})$, and that  
    \begin{equation} \label{local-existence-extend}
        \text{if }T_{\rm max}<\infty, \quad \text{then } \limsup_{t\to T_{\rm max}} \left \{ \left \| u \right \|_{L^\infty(\Omega)} +\left \| u \right \|_{W^{1,\infty}(\Omega)} \right \} = \infty.
    \end{equation}
\end{lemma}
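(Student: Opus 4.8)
The plan is to slave $v$ to $u$ through the elliptic equation, rewrite the first equation as a single semilinear parabolic problem, solve it by a Banach fixed point argument on a short time interval, and then recover higher regularity, positivity and the blow-up alternative by standard parabolic and elliptic theory; since the assertion is quoted verbatim from Lemma~4.1 of \cite{MW2022} (see also \cite{Winkler-Horstmann, Tello+Winkler, Lankeit-2017}), I only sketch the structure. For fixed $t$ and $u(\cdot,t)\in L^\infty(\Omega)$ the second equation with homogeneous Neumann data reads $-\Delta v+v=u$; since $-\Delta+I$ under Neumann boundary conditions is a positive, self-adjoint, boundedly invertible operator, it has a unique solution $v(\cdot,t)=(I-\Delta)^{-1}u(\cdot,t)=:\mathcal S[u(\cdot,t)]$, and elliptic $L^q$-regularity together with $W^{2,q}(\Omega)\hookrightarrow C^1(\bar\Omega)$ for $q>n$ gives $\|\nabla v(\cdot,t)\|_{L^\infty(\Omega)}\le c\|u(\cdot,t)\|_{L^\infty(\Omega)}$ as well as the Lipschitz bound $\|\nabla(\mathcal S[u]-\mathcal S[\tilde u])\|_{L^\infty(\Omega)}\le c\|u-\tilde u\|_{L^\infty(\Omega)}$.

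Substituting $v=\mathcal S[u]$, a mild solution of the first equation is a fixed point of
\[
\Phi[u](t):=e^{t\Delta}u_0-\int_0^t e^{(t-s)\Delta}\nabla\!\cdot\!\big(u(s)\,\nabla\mathcal S[u](s)\big)\,ds+\int_0^t e^{(t-s)\Delta}f(u(s))\,ds,
\]
where $(e^{t\Delta})_{t\ge0}$ is the Neumann heat semigroup. I would run the iteration on the ball $B_R:=\{u\in C^0([0,T];C^0(\bar\Omega)):\|u\|\le R\}$ with $R:=2\|u_0\|_{L^\infty(\Omega)}$: using $\|e^{t\Delta}\cdot\|_{L^\infty}\le\|\cdot\|_{L^\infty}$, the smoothing estimate $\|e^{t\Delta}\nabla\!\cdot\! w\|_{L^\infty}\le c(1+t^{-1/2})\|w\|_{L^\infty}$, the gradient bound above, and the local Lipschitz continuity of $f$ (which is smooth on $[0,\infty)$ because $\ln(u+e)\ge1$ there, and may be extended to a globally Lipschitz function for the purpose of the iteration), one obtains $\Phi(B_R)\subset B_R$ and $\|\Phi[u]-\Phi[\tilde u]\|\le c(R)(T+\sqrt T)\|u-\tilde u\|$ once $T=T(\|u_0\|_{L^\infty(\Omega)})$ is small. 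The Banach fixed point theorem then yields a unique mild solution on $[0,T]$, and a standard gluing/continuation argument produces the maximal existence time $T_{\rm max}$ and the maximal solution; uniqueness on $[0,T_{\rm max})$ propagates by the same estimate.

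With $u$ bounded on compact subintervals of $(0,T_{\rm max})$, a bootstrap through the same smoothing estimates gives $u\in C^{0,\theta}$ locally; since $\Delta v=v-u$, elliptic Schauder estimates give $\nabla v\in C^{0,\theta}$, and parabolic Schauder theory applied to $u_t=\Delta u-\nabla v\cdot\nabla u-u(v-u)+f(u)$ upgrades this to $u\in C^{2,1}(\bar\Omega\times(0,T_{\rm max}))$, with the stated regularity of $v=\mathcal S[u]$ following from elliptic regularity. Since $f(0)=0$, the constant $0$ is a subsolution of the first equation, so $u\ge0$; as $u_0\not\equiv0$, the strong maximum principle forces $u>0$ in $\bar\Omega\times(0,T_{\rm max})$, and then $v=\mathcal S[u]>0$ by positivity of the resolvent (equivalently, the elliptic maximum principle). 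Finally, the extensibility criterion \eqref{local-existence-extend} is the contrapositive of the short-time step: if $T_{\rm max}<\infty$ while $\limsup_{t\to T_{\rm max}}\big(\|u\|_{L^\infty(\Omega)}+\|u\|_{W^{1,\infty}(\Omega)}\big)<\infty$, then the existence time could be chosen uniformly on an interval reaching up to $T_{\rm max}$, allowing the solution to be continued past $T_{\rm max}$, a contradiction.

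I do not expect a genuine obstacle here: the only delicate points are the integrable $t^{-1/2}$ singularity in the smoothing estimate for $e^{t\Delta}\nabla\!\cdot\!(\cdot)$ — exactly what makes the cross-diffusion term a small perturbation on short time scales — and the verification that $u\mapsto u\,\nabla\mathcal S[u]$ and $f$ are Lipschitz on bounded subsets of $C^0(\bar\Omega)$, both of which are routine. Because the lemma is imported verbatim from \cite{MW2022}, in the paper it suffices to cite that source; the sketch above merely records why the cited result applies to \eqref{5.e.p.sub-logistic} with the sub-logistic source \eqref{logistic}.
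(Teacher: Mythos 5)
Your proposal is correct and matches the paper's treatment: the paper does not prove this lemma but imports it from Lemma~4.1 of \cite{MW2022} (citing \cite{Winkler-Horstmann, Tello+Winkler, Lankeit-2017} for the underlying fixed-point and parabolic regularity arguments), and your sketch is exactly the standard contraction-mapping/bootstrap/maximum-principle argument those references carry out. Nothing further is needed beyond the citation, as you yourself note.
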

We will use several interpolation inequalities extensively in the following sections. To start, we present an extended version of the Gagliardo-Nirenberg interpolation inequality, which was established in \cite{Li+Lankeit}.
\begin{lemma}[Gagliardo-Nirenberg interpolation inequality ] \label{GN}
Let $\Omega$ be a  bounded and smooth domain of $\mathbb{R}^n$ with $n \geq 1$. Let $r \geq 1$, $0<q\leq p < \infty$, $s>0$. Then there exists a constant $C_{GN}>0$ such that 
\begin{equation*}
    \left \| f \right \|^p_{L^p(\Omega)}\leq C_{GN}\left ( \left \| \nabla f \right \|_{L^r(\Omega)}^{pa}\left \| f \right \|^{p(1-a)}_{L^q(\Omega)} +\left \| f \right \|^p_{L^s(\Omega)}
 \right )
\end{equation*}
for all $f \in L^q(\Omega)$ with $\nabla f \in (L^r(\Omega))^n$, and $a= \frac{\frac{1}{q}-\frac{1}{p}}{\frac{1}{q}+\frac{1}{n}-\frac{1}{r}} \in [0,1]$.
\end{lemma}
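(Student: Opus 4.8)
The plan is to deduce this extended inequality from the \emph{classical} Gagliardo--Nirenberg inequality (which is available for integrability exponents $\geq 1$) and to capture the genuinely new feature of the statement, namely that $q$ is allowed to be smaller than $1$, by an interpolation-and-absorption argument. Accordingly I would split the proof into the case $q\geq 1$, where nothing beyond the classical theory is needed, and the case $0<q<1$, where the extension must be produced by hand. Throughout, the standing hypothesis $a\in[0,1]$ will be used to guarantee admissibility of the auxiliary exponent I introduce.

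In the case $q\geq 1$ (note this forces $p\geq 1$ since $q\leq p$), the classical Gagliardo--Nirenberg inequality on a bounded smooth domain gives $\|f\|_{L^p}\leq C\|\nabla f\|_{L^r}^{a}\|f\|_{L^q}^{1-a}+C\|f\|_{L^s}$; the additive term $\|f\|_{L^s}$ is precisely what replaces the scaling-exact statement on $\mathbb{R}^n$ once $\Omega$ is bounded, and it can be recovered from the $W^{1,r}$-version of the inequality together with a bounded extension operator $W^{1,r}(\Omega)\to W^{1,r}(\mathbb{R}^n)$. Raising to the $p$-th power and using $(x+y)^p\leq 2^{p}(x^p+y^p)$ then yields exactly the asserted form.

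For the new case $0<q<1\leq p$, I would first apply the result just obtained with the intermediate exponent $q_0=1$. This step is legitimate because the assumption $a\leq 1$ is equivalent to $\tfrac1r\leq\tfrac1n+\tfrac1p$, which in turn forces the corresponding classical exponent $a_0=\frac{1-1/p}{\,1+1/n-1/r\,}$ to lie in $[0,1]$. This gives $\|f\|_{L^p}^p\leq C\bigl(\|\nabla f\|_{L^r}^{pa_0}\|f\|_{L^1}^{p(1-a_0)}+\|f\|_{L^s}^p\bigr)$. I then interpolate the intermediate norm between $L^q$ and $L^p$ by log-convexity, $\|f\|_{L^1}\leq\|f\|_{L^q}^{\theta}\|f\|_{L^p}^{1-\theta}$ with $\theta=\frac{1-1/p}{\,1/q-1/p\,}\in(0,1)$, so that a factor $\|f\|_{L^p}^{p(1-\theta)(1-a_0)}$ appears on the right. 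Since $\theta>0$, the exponent $\lambda:=(1-\theta)(1-a_0)$ is strictly less than $1$, and Young's inequality lets me peel off $\tfrac12\|f\|_{L^p}^p$ and absorb it into the left-hand side.

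After absorption one is left with $\|f\|_{L^p}^p\leq C\|\nabla f\|_{L^r}^{\,pa_0/(1-\lambda)}\|f\|_{L^q}^{\,p\theta(1-a_0)/(1-\lambda)}+C\|f\|_{L^s}^p$, and the crux is to verify that these two exponents collapse to $pa$ and $p(1-a)$. This is the step I expect to be the main obstacle: it is a short but delicate computation in which, using $1-\lambda=a_0+\theta(1-a_0)$, one checks $\frac{a_0}{1-\lambda}=a$ and hence $\frac{\theta(1-a_0)}{1-\lambda}=1-a$, the consistency being ultimately dictated by the defining scaling relation $\frac1p=a\bigl(\frac1r-\frac1n\bigr)+(1-a)\frac1q$. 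Besides this bookkeeping, the only points requiring care are the admissibility of the intermediate exponent (handled above via $a\in[0,1]$) and the strict inequality $\lambda<1$ needed for the Young step; the borderline case $p<1$, should it arise, reduces to $p\geq1$ by a direct application of H\"older's inequality on the bounded domain $\Omega$.
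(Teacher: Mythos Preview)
The paper does not supply its own proof of this lemma; it simply records the inequality and cites \cite{Li+Lankeit}, where the extension to sub-unit exponents $q$ was carried out. Your argument is therefore a self-contained proof where the paper offers none, and it proceeds along a standard route that is essentially the one used in that reference: for $q\ge1$ the statement is the classical Gagliardo--Nirenberg inequality on a bounded domain, while for $0<q<1<p$ you bootstrap from the admissible intermediate exponent $q_0=1$, interpolate $\|f\|_{L^1}$ between $\|f\|_{L^q}$ and $\|f\|_{L^p}$ via H\"older, and absorb the resulting factor $\|f\|_{L^p}^{p\lambda}$ with Young. The exponent bookkeeping you outline is correct: writing $1-\lambda=a_0+\theta(1-a_0)$ one indeed obtains $\frac{a_0}{1-\lambda}=\frac{1/q-1/p}{1/q+1/n-1/r}=a$.

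The only soft spot is the residual range $q<1$, $p\le1$. When $p=1$ both $a_0$ and $\theta$ vanish, so the absorption step is vacuous, and your H\"older reduction for $p<1$ then lands back on this same unresolved case. This is easy to repair---for instance by applying the classical inequality to a suitable power $|f|^{\gamma}$ so that the effective integrability exponents all exceed $1$, which is the device used in \cite{Li+Lankeit}---but it deserves an explicit sentence rather than the parenthetical you give it.
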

In \cite{Cao}, an interpolation inequality of Ehrling-type is utilized to show that the equi-integrability of the family $\left \{ \int_\Omega u^{\frac{n}{2}}(\cdot,t) \right \}_{t \in (0,T_{\rm max})}$ implies the uniform boundedness of solutions. In this paper, we present an interpolation inequality that is similar to \cite[Lemma 2.1]{Cao}, and which will be employed to obtain an $L^q$ estimate with $q\geq 2$ for the solutions of the system \eqref{5.e.p.sub-logistic}. To prove this inequality, we adapt the argument used in the proof of inequality (22) in \cite{Biler+Hebisch}, with some modifications. We include a complete proof of this interpolation inequality below for the reader's convenience.
\begin{lemma} \label{ine-GN}
    Let $\Omega \subset \mathbb{R}^n$, with $n \geq 2$ be a bounded domain with smooth boundary and $q>\frac{n}{2}$. Then one can find $C>0$ such that for each $\epsilon >0$, there exists $c(\epsilon )>0$ such that
\begin{equation} \label{unif-GN}
\int_\Omega |w|^{q+1} \leq \epsilon \int_\Omega |\nabla w^{\frac{q}{2}}|^2 \left ( \int_\Omega G(|w|^{\frac{n}{2}}) \right )^{\frac{2}{n}} +C \left ( \int_\Omega |w| \right )^{q+1} + c(\epsilon) \int_\Omega |w|
\end{equation}
holds for all $w^{\frac{q}{2}} \in W^{1,2}(\Omega)$, and $\int_\Omega G(|w|^\frac{n}{2}) <\infty$ where $G $ is continuous, strictly increasing and  nonnegative  in $[0,\infty)$ such that $\lim_{s\to \infty} \frac{G(s)}{s} = \infty $. 
\end{lemma}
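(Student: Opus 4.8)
\emph{Proof strategy.} Since the statement involves only $|w|$ I may assume $w\ge 0$, and I set $g:=w^{q/2}\in W^{1,2}(\Omega)$. Note first that $\int_\Omega G(w^{n/2})<\infty$ together with $\lim_{s\to\infty}G(s)/s=\infty$ (which makes $G(s)\ge s$ for large $s$) forces $w\in L^{n/2}(\Omega)\subset L^1(\Omega)$, so every integral below is finite. The plan, following the splitting used in the proof of inequality~(22) of \cite{Biler+Hebisch}, is to fix a threshold $N=N(\epsilon)>1$ only at the very end and to estimate $\int_\Omega w^{q+1}$ separately on $\{w\le(2N)^{2/q}\}$, where it is trivially controlled by a multiple of $\int_\Omega w$ and will supply the term $c(\epsilon)\int_\Omega w$, and on $\{w>(2N)^{2/q}\}$, where I apply the Gagliardo--Nirenberg inequality of Lemma~\ref{GN}. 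The decisive point is that the factor multiplying the Dirichlet energy in that application will be a power of the mass of $w^{n/2}$ over the large-value set, and there the superlinearity $\lim_{s\to\infty}G(s)/s=\infty$ lets me bound $\int w^{n/2}$ by an arbitrarily small multiple of $\int_\Omega G(w^{n/2})$.

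\emph{The Gagliardo--Nirenberg step.} I would carry this out on the truncation $g_N:=(g-N)_+\in W^{1,2}(\Omega)$, which satisfies $|\nabla g_N|\le|\nabla w^{q/2}|$ a.e., vanishes outside $\{w>N^{2/q}\}$, and obeys the pointwise bounds $g_N^{\,2/q}\le w$ and $g_N^{\,n/q}\le w^{n/2}$. Applying Lemma~\ref{GN} to $g_N$ with $r=2$, with the two remaining exponents there taken equal to $p=2+\tfrac2q$ and $\tfrac nq$, and with $s=\tfrac2q$ --- an admissible choice of exponents (using $q>\tfrac n2$), for which the interpolation exponent works out to $a=\tfrac{q}{q+1}\in(0,1)$ so that $pa=2$ and the Dirichlet energy enters only to the first power --- yields
\begin{align*}
\int_\Omega g_N^{\,2+\frac2q}\ &\le\ C_{GN}\left(\int_\Omega|\nabla g_N|^2\cdot\Big(\int_\Omega g_N^{\,n/q}\Big)^{2/n}+\Big(\int_\Omega g_N^{\,2/q}\Big)^{q+1}\right)\\
&\le\ C_{GN}\left(\int_\Omega|\nabla w^{q/2}|^2\cdot\Big(\int_{\{w>N^{2/q}\}}w^{n/2}\Big)^{2/n}+\Big(\int_\Omega w\Big)^{q+1}\right).
\end{align*}

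\emph{Transfer and conclusion.} To pass back from $g_N$ to $w$, I would use that on $\{w\le(2N)^{2/q}\}$ one has $w^{q+1}=w^q\cdot w\le(2N)^2 w$, while on $\{w>(2N)^{2/q}\}$ one has $g=w^{q/2}\ge 2N$, hence $g-N\ge\tfrac12 g$ and $w^{q+1}=g^{2+2/q}\le 2^{2+2/q}g_N^{\,2+2/q}$; adding the two contributions gives $\int_\Omega w^{q+1}\le(2N)^2\int_\Omega w+2^{2+2/q}\int_\Omega g_N^{\,2+2/q}$. Since $G(s)/s\to\infty$, for each $\eta>0$ there is $S_\eta<\infty$ with $G(s)\ge s/\eta$ whenever $s\ge S_\eta$, so taking $N\ge\max\{1,S_\eta^{\,q/n}\}$ forces $w^{n/2}\le\eta\,G(w^{n/2})$ on all of $\{w>N^{2/q}\}$ and therefore $\int_{\{w>N^{2/q}\}}w^{n/2}\le\eta\int_\Omega G(w^{n/2})$. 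Combining the three estimates, setting $C:=2^{2+2/q}C_{GN}$ (which is independent of $\epsilon$), then choosing $\eta$ so small that $C\eta^{2/n}\le\epsilon$ and letting $c(\epsilon):=(2N)^2$ for the $N$ this produces, delivers exactly \eqref{unif-GN}.

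\emph{Main obstacle.} The only genuinely delicate point is the simultaneous bookkeeping in the middle step: the four exponents fed into Lemma~\ref{GN} must be tuned so that (i) the gradient term appears only to the first power and (ii) the quantity multiplying it is precisely a power of $\int_{\{w>N^{2/q}\}}w^{n/2}$, so that the superlinear growth of $G$ can be cashed in there at the price of an arbitrarily small $\epsilon$. One must also truncate $g=w^{q/2}$ itself rather than $w$, because $|\nabla(g-N)_+|\le|\nabla g|$ holds unconditionally, whereas the analogous bound for a truncation of $w$ fails when $q<2$. Everything else --- the integrability needed to apply Lemma~\ref{GN}, which follows from $w\in L^{n/2}(\Omega)$, and the elementary splitting inequalities --- is routine.
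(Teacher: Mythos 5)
Your proposal is correct and follows essentially the same route as the paper: truncate away the small values of $w$ (which produces the $c(\epsilon)\int_\Omega|w|$ term), apply the Gagliardo--Nirenberg inequality of Lemma~\ref{GN} with exponents tuned so that the Dirichlet energy enters linearly against $\bigl(\int(\text{truncation})^{n/2}\bigr)^{2/n}$, and use $G(s)/s\to\infty$ to absorb that factor into an arbitrarily small $\epsilon$. The one substantive difference is that you truncate at the level of $g=w^{q/2}$ via $(g-N)_+$ rather than truncating $w$ itself as the paper does with its cutoff $\xi$, which gives you $|\nabla(g-N)_+|\le|\nabla w^{q/2}|$ for free and sidesteps the paper's comparison $|\nabla(\xi(w))^{q/2}|^2\le c\,|w|^{q-2}|\nabla w|^2$, a step that is delicate when $q<2$.
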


\begin{proof}
    We call 
    \begin{equation}
     \xi(s)  = \left\{\begin{matrix}
 0&  |s| \leq N \\
 2(|s|-N)& N< |s| \leq 2N \\
 |s|& |s|>2N.  \\
\end{matrix}\right.
    \end{equation}
    One can verify that
    \begin{align}
        \int_\Omega ||w|-\xi(w)|^{q+1} \leq (2N)^q \int_\Omega |w|
    \end{align}
and,
\begin{align}
    \int_\Omega \xi(w)^{\frac{n}{2}} \leq \frac{N^{\frac{n}{2}}}{G(N^{\frac{n}{2}})} \int_\Omega G(|w|^{\frac{n}{2}}).
\end{align}
Notice that $|\nabla \left (  \xi(w) \right )^{\frac{q}{2}}|^2 \leq c|w|^{q-2}|\nabla w|^2$, for some $c>0$, and combine with Lemma \ref{GN}, we obtain
\begin{align}
    \int_\Omega (\xi(w))^{q+1} &\leq c \int_\Omega |\nabla (\xi(w))^{\frac{q}{2}}|^2 \left ( \int_\Omega \xi(w)^{\frac{n}{2}} \right )^{\frac{2}{n}}  +C \left(  \int_\Omega \xi(w) \right )^{q+1} \notag \\
    &\leq c \left (\frac{N^{\frac{n}{2}}}{G(N^\frac{n}{2})} \right)^\frac{2}{n} \int_\Omega |\nabla w^{\frac{q}{2}}|^2 \left ( \int_\Omega G(|w|^\frac{n}{2})  \right )^{\frac{2}{n}} +C\left(  \int_\Omega |w| \right )^{q+1}.
\end{align}
This leads to
\begin{align}
    \int_\Omega |w|^{q+1} &\leq c\left (  \int_\Omega|\xi(w)|^{q+1} +\int_\Omega |\xi(w)- |w||^{q+1}  \right ) \notag \\ &\leq \left (\frac{N^{\frac{n}{2}}}{G(N^\frac{n}{2})} \right)^\frac{2}{n}  \int_\Omega |\nabla w^{\frac{q}{2}}|^2  \left ( \int_\Omega G(|w|^\frac{n}{2})  \right )^{\frac{2}{n}}+C\left(  \int_\Omega |w| \right )^{q+1} +(2N)^q \int_\Omega |w|.
\end{align}
We finally complete the proof by choosing N sufficiently large such that $c \left (\frac{N^{\frac{n}{2}}}{G(N^\frac{n}{2})} \right)^\frac{2}{n} \leq \epsilon $.
\end{proof}
Let us recall de la Vallée-Poussin Theorem
\begin{lemma} \label{VPlemma}
    The family $\left \{ X_\alpha \right \}_{\alpha \in A} \subset L^1(\mu)$ is uniformly integrable if and only if there exists a non-negative increasing convex function $G(t)$ such that
    \begin{align*}
        \lim_{t\to \infty} \frac{G(t)}{t}=\infty \qquad \text{and } \sup_\alpha \int_\Omega G(X_\alpha) <\infty.
    \end{align*}
\end{lemma}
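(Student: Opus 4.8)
The plan is to prove the two implications separately, treating the ``if'' direction as a short warm-up and concentrating the real effort on the ``only if'' direction, which requires an explicit construction of $G$. Throughout I work with the standard tail formulation of uniform integrability, namely that
\[
\phi(M) := \sup_{\alpha \in A} \int_{\{|X_\alpha| > M\}} |X_\alpha| \, d\mu \longrightarrow 0 \quad \text{as } M \to \infty,
\]
and I use the finiteness of the underlying measure (here $\mu(\Omega)<\infty$) where an $L^1$ bound is needed. Note that $G$ is applied to $|X_\alpha|$, which specializes to $G(X_\alpha)$ for the nonnegative families occurring in the applications.

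For sufficiency I would assume such a $G$ exists with $\sup_\alpha \int_\Omega G(|X_\alpha|)\,d\mu =: C < \infty$. Given $\epsilon>0$, the growth hypothesis $G(t)/t\to\infty$ lets me pick $M$ so large that $t \le \frac{\epsilon}{C+1}\,G(t)$ for all $t\ge M$. On each tail set $\{|X_\alpha|>M\}$ this gives the pointwise bound $|X_\alpha|\le \frac{\epsilon}{C+1}G(|X_\alpha|)$, hence $\int_{\{|X_\alpha|>M\}}|X_\alpha|\,d\mu \le \frac{\epsilon}{C+1}\int_\Omega G(|X_\alpha|)\,d\mu \le \epsilon$ uniformly in $\alpha$. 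Thus $\phi(M)\to 0$ and the family is uniformly integrable.

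For necessity I would build $G$ directly from the tail function $\phi$. Since $\phi(M)\downarrow 0$, I select an increasing sequence $0<a_1<a_2<\cdots\to\infty$ with $\phi(a_n)\le 2^{-n}$, and define the non-decreasing step function $g(s)=\sum_{n\ge 1}\mathbf 1_{[a_n,\infty)}(s)$, which tends to $\infty$ because every bounded interval contains only finitely many of the $a_n$. Setting $G(t)=\int_0^t g(s)\,ds$ yields a non-negative, non-decreasing, convex function (an antiderivative of a non-decreasing function), and interchanging the sum with the integral gives the clean closed form $G(t)=\sum_{n\ge 1}(t-a_n)_+$. Superlinearity follows since, for any fixed $K$ and any $t>a_K$, retaining only the first $K$ summands gives $G(t)\ge K(t-a_K)$, so $\liminf_{t\to\infty} G(t)/t \ge K$ for every $K$, forcing $G(t)/t\to\infty$.

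The crux is the uniform integral bound, and the construction is designed precisely to deliver it. Using the closed form together with the pointwise estimate $(|X_\alpha|-a_n)_+ \le |X_\alpha|\,\mathbf 1_{\{|X_\alpha|>a_n\}}$ and monotone convergence to justify the interchange, I obtain
\[
\int_\Omega G(|X_\alpha|)\,d\mu = \sum_{n\ge 1}\int_\Omega (|X_\alpha|-a_n)_+\,d\mu \le \sum_{n\ge 1}\int_{\{|X_\alpha|>a_n\}}|X_\alpha|\,d\mu \le \sum_{n\ge 1}\phi(a_n) \le \sum_{n\ge 1}2^{-n}=1,
\]
uniformly in $\alpha$. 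The main obstacle is exactly this balancing act in the necessity direction: the cutoffs $a_n$ must grow fast enough that $\sum_n \phi(a_n)<\infty$ (so the bound above is finite), yet still tend to infinity (so that $g$, and hence the slope of $G$, blows up and guarantees superlinear growth). The tail decay $\phi(M)\to 0$ supplied by uniform integrability is precisely what makes both requirements simultaneously achievable, and the remaining verifications (convexity, monotonicity, the superlinear limit) are routine consequences of the definition of $G$.
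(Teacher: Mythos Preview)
The paper does not actually prove this lemma; it is merely stated as a classical recall (``Let us recall de la Vall\'ee-Poussin Theorem'') and then used later, so there is no proof in the paper to compare against.

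Your argument is the standard one and is essentially correct in both directions. One cosmetic point: as written, your constructed function $G(t)=\sum_{n\ge 1}(t-a_n)_+$ vanishes identically on $[0,a_1]$, so it is only non-decreasing rather than strictly increasing; if the statement is read with ``increasing'' meaning strictly increasing, you can repair this at no cost by adding a term $(t-0)_+=t$ (i.e.\ taking $a_0=0$) or any small linear piece, which preserves convexity, the bound $\sup_\alpha\int G(|X_\alpha|)\,d\mu<\infty$ (since uniform integrability on a finite measure space gives $\sup_\alpha\|X_\alpha\|_{L^1}<\infty$), and the superlinear growth. With that tweak the proof is complete.
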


\section{A priori estimates and proof of main theorem}
In this section, $(u,v)$ is a classical solutions as defined in Lemma \ref{local-existence} to the system \eqref{5.e.p.sub-logistic} with $p=1$. Our aim is to establish a priori estimate for the solutions. While the method in \cite{OTYM} and \cite{Tian1} relies on the $L^1$-estimate of $u$ and the absorption of $-\int_\Omega |\nabla u^{\frac{1}{2}}|^2$ to obtain a $L \ln L$ uniform bound, we take advantage of the term $- \mu\frac{u^2}{\ln(u+e)}$ to obtain a weaker $L \ln \ln L$ uniform bound.
\begin{lemma} \label{5.prioriest.lm.paper3}
    There exists $C=C(u_0,v_0, |\Omega|,\mu )>0$ such that
    \begin{equation}
        \sup_{t \in (0,T_{\rm max})} \int_\Omega u(\cdot,t) \ln(\ln(u(\cdot, t)+e)) \leq C.
    \end{equation}
\end{lemma}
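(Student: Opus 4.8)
The plan is to differentiate $y(t)=\int_\Omega u\,\ln\ln(u+e)$ directly and turn the computation into an autonomous differential inequality $y'\le C-cy$ with $c>0$. Write $F(s)=\ln\ln(s+e)$ and $A(s):=F(s)+sF'(s)=\ln\ln(s+e)+\frac{s}{(s+e)\ln(s+e)}$, so that $y'(t)=\int_\Omega A(u)\,u_t$. Substituting the first equation of \eqref{5.e.p.sub-logistic} and integrating by parts twice---using the Neumann conditions \eqref{boundary-data} and the identity $\Delta v=v-u$ coming from the second equation, and introducing $B(s):=\int_0^s\sigma A'(\sigma)\,d\sigma$ (so that $\nabla B(u)=uA'(u)\nabla u$)---I obtain the exact identity
\begin{equation*}
y'(t)=-\int_\Omega A'(u)\,|\nabla u|^2+\int_\Omega B(u)\,(u-v)+\int_\Omega A(u)\,f(u).
\end{equation*}

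Next I would record the elementary facts that make the right-hand side usable. A short calculus check gives $A'(s)>0$ for every $s\ge 0$ (writing $A'(s)=\frac{1}{(s+e)\ln(s+e)}\bigl[\,2-\frac{s}{s+e}\bigl(1+\frac1{\ln(s+e)}\bigr)\bigr]$, the bracket is positive since $\frac{s}{s+e}<1$ and $1+\frac1{\ln(s+e)}\le 2$); hence $-\int_\Omega A'(u)|\nabla u|^2\le 0$ and $B\ge 0$, and since $v>0$ also $\int_\Omega B(u)(u-v)\le\int_\Omega B(u)\,u$. Moreover $\ln\ln(s+e)\le A(s)\le\ln\ln(s+e)+1$, the map $s\mapsto\frac{s}{\ln(s+e)}$ is increasing, and since $\sigma A'(\sigma)\le\frac{2}{\ln(\sigma+e)}$ a comparison of derivatives yields $B(s)\le 2\int_0^s\frac{d\sigma}{\ln(\sigma+e)}\le C_0\,\frac{s}{\ln(s+e)}$ for an absolute constant $C_0$, so $B(u)u\le C_0\frac{u^2}{\ln(u+e)}$. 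With $f(u)=ru-\mu\frac{u^2}{\ln(u+e)}$ the estimate reduces to
\begin{equation*}
y'(t)\le\int_\Omega\Bigl(C_0\,\frac{u^2}{\ln(u+e)}+r\,uA(u)-\mu\,\frac{u^2A(u)}{\ln(u+e)}\Bigr).
\end{equation*}

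The crux, and the step I expect to require the most care, is the absorption on the set where $u$ is large: since $A(u)\to\infty$ and $\frac{u}{\ln(u+e)}\to\infty$ as $u\to\infty$, one can fix $K>0$ (depending on $\mu$ and $|r|$) so large that $C_0\frac{u^2}{\ln(u+e)}+|r|\,uA(u)\le\frac\mu2\frac{u^2A(u)}{\ln(u+e)}$ whenever $u>K$, while on $\{u\le K\}$ the integrand is bounded by a constant depending only on $K,r,|\Omega|$. This is exactly where the $\ln\ln$ in the functional pays off: $B(u)u$ has the size $u^2/\ln u$, which the degradation term $\mu\frac{u^2A(u)}{\ln(u+e)}\sim\mu\frac{u^2\ln\ln u}{\ln u}$ dominates for \emph{every} $\mu>0$ because of the extra factor $\ln\ln u$, whereas the same computation for $\int_\Omega u\ln^k(u+e)$ would only produce a term of size $u^2$ and would thus force a largeness assumption on $\mu$. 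Consequently $y'(t)\le C-\frac\mu2\int_{\{u>K\}}\frac{u^2A(u)}{\ln(u+e)}$.

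Finally I would feed the dissipation back into $y$: on $\{u>K\}$ one has $\frac{u^2A(u)}{\ln(u+e)}\ge\frac{u}{\ln(u+e)}\,u\ln\ln(u+e)\ge\frac{K}{\ln(K+e)}\,u\ln\ln(u+e)$ by monotonicity of $s\mapsto s/\ln(s+e)$, whereas $\int_{\{u\le K\}}u\ln\ln(u+e)\le K\ln\ln(K+e)\,|\Omega|$; hence $\int_{\{u>K\}}\frac{u^2A(u)}{\ln(u+e)}\ge\kappa\bigl(y(t)-C'\bigr)$ with $\kappa=\frac{K}{\ln(K+e)}$. This gives $y'(t)\le C''-\frac{\mu\kappa}{2}y(t)$, and a standard ODE comparison---together with $y(0)=\int_\Omega u_0\ln\ln(u_0+e)<\infty$, which holds since $u_0\in L^\infty(\Omega)$---yields $\sup_{t\in(0,T_{\rm max})}y(t)\le\max\{y(0),\,2C''/(\mu\kappa)\}$, as claimed. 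The only point beyond bookkeeping is justifying the differentiation of $y$ and the integrations by parts (legitimate by the regularity in Lemma \ref{local-existence}, applying the inequality on $(\tau,T_{\rm max})$ for small $\tau>0$ and using local boundedness of $u$ near $t=0$).
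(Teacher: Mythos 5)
Your proposal is correct and follows essentially the same route as the paper: differentiate $y(t)=\int_\Omega u\ln\ln(u+e)$, discard the (signed) diffusion term, rewrite the chemotaxis term via the antiderivative $B(s)=\int_0^s\sigma A'(\sigma)\,d\sigma$ (the paper's $\phi$) and the elliptic equation, bound it by $C_0\int_\Omega\frac{u^2}{\ln(u+e)}$, and absorb this together with $r\int_\Omega uA(u)$ into $-\mu\int_\Omega\frac{u^2A(u)}{\ln(u+e)}$ thanks to the extra $\ln\ln$ factor, closing with an ODE comparison. The only difference is cosmetic: the paper packages the absorption as ``$\epsilon$ times the good term plus a constant'' via L'Hospital, while you split at a level set $u>K$ with explicit constants.
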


\begin{proof}
    We define $y(t)=\int_\Omega u\ln (\ln (u+e))$ and differentiate $y$ to obtain
    \begin{align} \label{5proof.thm.1}
        y'(t)&= \int_\Omega \left [ \ln (\ln (u+e))+\frac{u}{(u+e)\ln(u+e)} \right ]u_t \notag \\
        &=\int_\Omega \left [ \ln (\ln (u+e))+\frac{u}{(u+e)\ln(u+e)} \right ] \left ( \Delta u-\nabla\cdot (u\nabla v)+ru-\mu \frac{u^2}{\ln(u+e)}\right ) \notag \\
        &=-\int_\Omega \nabla \left [ \ln (\ln (u+e))+\frac{u}{(u+e)\ln(u+e)} \right ] \cdot \nabla u \notag \\
        &+ \int_\Omega u \nabla \left ( \ln (\ln(u+e)) +\frac{u}{(u+e)\ln(u+e)} \right )\cdot \nabla v \notag \\
        &+\int_\Omega \left [ \ln (\ln (u+e))+\frac{u}{(u+e)\ln(u+e)} \right ] \left ( ru -\mu \frac{u^2}{\ln(u+e)} \right ) \notag \\
        &:= I+J+K
    \end{align}
    By integration by parts, we have
    \begin{align}\label{5proof.thm.2}
        I &=-\int_\Omega \nabla \left [ \ln (\ln (u+e))+\frac{u}{(u+e)\ln(u+e)} \right ] \cdot \nabla u \notag \\
        &= -\int_\Omega \left [ \frac{1}{(u+e)\ln(u+e)} +\frac{e\ln(u+e)-u}{(u+e)^2\ln^2(u+e)}  \right ]|\nabla u|^2 \notag \\
        &=-\int_\Omega \frac{u\ln(u+e)+2e\ln(u+e)-u}{(u+e)^2\ln^2(u+e)}|\nabla u|^2 \leq 0.
    \end{align}
    Similarly, we have
    \begin{align}\label{5proof.thm.3}
        J&= \int_\Omega u \nabla \left ( \ln (\ln(u+e)) +\frac{u}{(u+e)\ln(u+e)} \right )\cdot \nabla v \notag \\
        &= \int_\Omega \frac{u^2(\ln(u+e)-1)+2e\ln(u+e)}{(u+e)^2\ln^2(u+e)} \nabla u \cdot \nabla v \notag \\
        &=\int_\Omega \nabla \phi(u) \cdot \nabla v=\int_\Omega \phi(u)(u-v)\leq\int_\Omega u\phi(u),
    \end{align}
    where
    \begin{align}\label{5proof.thm.4}
        0 \leq \phi(u):= \int_0^u \frac{s^2(\ln(s+e)-1)+2e\ln(s+e)}{(s+e)^2\ln^2(s+e)}\, ds \leq \int_0^u \frac{1}{\ln(s+e)}\, ds.
    \end{align}
 Thus, we obtain
 \begin{align}\label{5proof.thm.5}
     J\leq \int_\Omega u \int_0^u \frac{1}{\ln(s+e)}\,ds.
 \end{align}
 By L'Hospital lemma, we have
 \begin{align}\label{5proof.thm.6}
     \lim_{u\to \infty} \frac{\int_0^u \frac{1}{\ln(s+e)}\, ds}{\frac{u\ln(\ln(u+e))}{\ln(u+e)}}=\lim_{u\to \infty} \frac{\ln(u+e)}{\ln(u+e)\ln(\ln(u+e)) +\frac{u}{u+e} -\frac{u}{u+e} \ln(\ln (u+e))}=0.
 \end{align}
 Therefore, for any $\epsilon>0$, there exist $N$ depending on $\epsilon$ such that for $u>N$, we have
 \begin{align}\label{5proof.thm.7}
     \int_0^u \frac{1}{\ln(s+e)}\,ds \leq \epsilon u \frac{\ln(\ln(u+e))}{\ln(u+e)}.
 \end{align}
This leads to
\begin{align}\label{5proof.thm.8}
    \int_\Omega u \int_0^u \frac{1}{\ln(s+e)}\,ds &=\int_{u\leq N} u \int_0^u \frac{1}{\ln(s+e)}\,ds +\int_{u> N} u \int_0^u \frac{1}{\ln(s+e)}\,ds \notag \\
    & \leq \epsilon \int_\Omega u^2\frac{\ln(\ln(u+e))}{\ln(u+e)}+c
\end{align}
 where $c=N^2|\Omega|$. From \eqref{5proof.thm.5} and \eqref{5proof.thm.8}, we imply
 \begin{align}\label{5proof.thm.9}
     J\leq \epsilon \int_\Omega u^2\frac{\ln(\ln(u+e))}{\ln(u+e)}+c.
 \end{align}
One can verify that for any $\epsilon >0$, there exist $C(\epsilon)>0$ such that
 \begin{align}\label{5proof.thm.10}
     K&= \int_\Omega \left [ \ln (\ln (u+e))+\frac{u}{(u+e)\ln(u+e)} \right ] \left ( ru -\mu \frac{u^2}{\ln(u+e)} \right ) \notag \\
     &\leq (\epsilon -\mu) \int_\Omega u^2\frac{\ln(\ln(u+e))}{\ln(u+e)}+c
 \end{align}
 and 
 \begin{align}\label{5proof.thm.11}
     y(t)\leq \epsilon \int_\Omega u^2\frac{\ln(\ln(u+e))}{\ln(u+e)}+c.
 \end{align}
 Collect \eqref{5proof.thm.1}, \eqref{5proof.thm.2}, \eqref{5proof.thm.5}, \eqref{5proof.thm.9},\eqref{5proof.thm.10}, and \eqref{5proof.thm.11}, we have
 \begin{align}
     y'(t)+y(t) \leq (3\epsilon -\mu)\int_\Omega u^2\frac{\ln(\ln(u+e))}{\ln(u+e)}+c.
 \end{align}
 We choose $\epsilon$ sufficiently small and apply Gronwall's inequality to imply $y(t)\leq C$ for all $t>0$.
\end{proof}
Thank to Lemma \ref{VPlemma}, the equi-integrability of $\left \{ \int_\Omega u^{\frac{n}{2}}(\cdot,t) <\infty\right \}_{t\in (0, T_{\rm max})}$ is equivalent to \\ $\sup_{t \in (0,T_{\rm max})} \int_\Omega G(u^{\frac{n}{2}}(\cdot,t)) <\infty$ for some non-negative increasing convex function such that $\lim_{s \to \infty}\frac{G(s)}{s} =\infty$. However, the convexity condition is not necessary, which means that the equi-integrable condition can be relaxed. Indeed, following proposition gives us the $L^q$ bounds, where $q>\frac{n}{2}$ for solutions without the convexity assumption. 
\begin{Prop} \label{5.Pop.paper3} Let $\Omega \subset \mathbb{R}^n$, where $n \geq 2$, be a bounded domain with smooth boundary, and $f \in C^2([0,\infty))$  such that $f(s)\leq c(s^2+1)$ for all $s\geq 0$, where $c>0$. Assume that $(u,v)$ is a classical solution as in Lemma \ref{local-existence} of \eqref{5.e.p.sub-logistic} on $\Omega \times (0,T_{\rm max})$ with maximal existence time $T_{\rm max} \in (0, \infty]$. If there exists a nonnegative increasing function $G$ such that
 \begin{align*}
        \lim_{t\to \infty} \frac{G(s)}{s}=\infty \qquad \text{and } \sup_{t \in (0, T_{\rm max})} \int_\Omega G(u^{\frac{n}{2}}(\cdot,t)) <\infty,
    \end{align*}
then for any $q>\frac{n}{2}$ we have
\begin{align*}
    \sup_{t \in (0,T_{\rm max})} \int_\Omega u^q(\cdot ,t) <\infty.
\end{align*}
\end{Prop}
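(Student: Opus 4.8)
The plan is to run the standard $L^q$-testing bootstrap on the first equation of \eqref{5.e.p.sub-logistic} and to absorb the supercritical term $\int_\Omega u^{q+1}$ into the diffusion term by means of Lemma \ref{ine-GN}; the key point is that the prefactor $\bigl(\int_\Omega G(u^{n/2})\bigr)^{2/n}$ appearing in that lemma stays bounded uniformly in $t$ by hypothesis. I would first record a uniform $L^1$ bound on $u$: since $G$ is increasing with $G(s)/s\to\infty$, there is $s_0>0$ with $G(s)\ge s-s_0$ for all $s\ge 0$, hence $\sup_t\int_\Omega u^{n/2}\le \sup_t\int_\Omega G(u^{n/2})+s_0|\Omega|<\infty$, and since $n\ge 2$ gives $n/2\ge1$, Hölder's inequality yields $m_1:=\sup_t\int_\Omega u<\infty$. (If one insists on applying Lemma \ref{ine-GN} with its precise hypotheses, namely $G$ continuous and strictly increasing, one may at this stage replace $G$ by the convex majorant produced by Lemma \ref{VPlemma}, adding $s$ to make it strictly increasing; this affects none of the bounds just obtained.)

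Next, fix $q>\tfrac n2$ and test the first equation with $u^{q-1}$. Integration by parts produces a dissipation term $-\frac{4(q-1)}{q^2}\int_\Omega|\nabla u^{q/2}|^2$; the chemotaxis contribution, after integrating by parts twice and substituting $\Delta v=v-u$ from the second equation, equals $-\frac{q-1}{q}\int_\Omega u^q(v-u)\le \frac{q-1}{q}\int_\Omega u^{q+1}$ because $u,v\ge 0$; and the source term is controlled by $f(s)\le c(s^2+1)$ together with the elementary pointwise bound $u^{q-1}\le u^{q+1}+1$. Altogether this gives
\begin{align*}
\frac1q\frac{d}{dt}\int_\Omega u^q + \frac{4(q-1)}{q^2}\int_\Omega|\nabla u^{q/2}|^2 \le C_1\int_\Omega u^{q+1} + C_1 .
\end{align*}

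The crux is to control the supercritical term. Applying Lemma \ref{ine-GN} with $w=u$ and writing $M:=\sup_t\int_\Omega G(u^{n/2})<\infty$, for every $\epsilon>0$ one has
\begin{align*}
\int_\Omega u^{q+1}\le \epsilon M^{2/n}\int_\Omega|\nabla u^{q/2}|^2 + c(\epsilon)m_1 + C m_1^{q+1},
\end{align*}
and choosing $\epsilon$ small enough that $C_1\epsilon M^{2/n}\le \frac{2(q-1)}{q^2}$ absorbs this term into the dissipation, leaving
\begin{align*}
\frac1q\frac{d}{dt}\int_\Omega u^q + \frac{2(q-1)}{q^2}\int_\Omega|\nabla u^{q/2}|^2 \le C_2 .
\end{align*}
This is the step I expect to be the main obstacle: it succeeds only because the hypothesis bounds $\int_\Omega G(u^{n/2})$ \emph{uniformly in time}, so the coefficient multiplying $\int_\Omega|\nabla u^{q/2}|^2$ is a fixed constant times $\epsilon$; and it is exactly here that the restriction $q>\tfrac n2$ enters, as that is what makes Lemma \ref{ine-GN} available.

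It remains to close a Gronwall loop. Applying the Gagliardo--Nirenberg inequality of Lemma \ref{GN} to $u^{q/2}$ with $p=r=2$ and $q_{GN}=s=\tfrac2q$, the interpolation exponent is $a=\frac{q-1}{q-1+2/n}<1$, so Young's inequality gives $\int_\Omega u^q\le \delta\int_\Omega|\nabla u^{q/2}|^2 + C(\delta)$ for every $\delta>0$, where $C(\delta)$ depends also on $m_1$. Choosing $\delta$ small and inserting this into the previous display yields $\frac{d}{dt}\int_\Omega u^q + \kappa\int_\Omega u^q\le C_3$ for some $\kappa>0$; since $u_0\in W^{1,\infty}(\Omega)$ gives $\int_\Omega u_0^q<\infty$, Gronwall's inequality then produces $\sup_{t\in(0,T_{\rm max})}\int_\Omega u^q(\cdot,t)\le \max\{\int_\Omega u_0^q,\, C_3/\kappa\}<\infty$, which is the assertion.
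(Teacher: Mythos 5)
Your proposal is correct and follows essentially the same route as the paper: test with $u^{q-1}$, convert the chemotaxis term via $\Delta v = v-u$ into $\int_\Omega u^{q+1}$, absorb all supercritical terms into the dissipation using Lemma \ref{ine-GN} together with the uniform-in-time bound on $\int_\Omega G(u^{n/2})$, and close with Gronwall. The only (harmless) deviations are that you make explicit the uniform $L^1$ bound that the paper uses implicitly, and you close the Gronwall loop via Gagliardo--Nirenberg interpolation where the paper simply applies Young's inequality to bound $\int_\Omega u^q$ by $\int_\Omega u^{q+1}$.
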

\begin{proof}
    We define 
    \[
    \phi(t):=\frac{1}{q} \int_\Omega u^q
    \]
and differentiate $\phi$ to obtain 
 \begin{align} \label{5.Pop.proof.1}
     \phi'(t) &=\int_\Omega u^{q-1} [\Delta u -\nabla \cdot (u\nabla v) +f(u)] \notag \\
     &= -c_1  \int_\Omega |\nabla u^{\frac{q}{2}}|^2 +c_2\int_\Omega u^{\frac{q}{2}} \nabla u^{\frac{q}{2}} \cdot \nabla v +c\int_\Omega u^{q+1} + u^{q-1}\notag \\
     &= I+J+K,
 \end{align}
 where $c_1,c_2$ are positive depending only on $q$. We make use of integration by parts and the second equation of \eqref{5.e.p.sub-logistic} to obtain
\begin{align}\label{5.Pop.proof.2}
  J&:=  c_2\int_\Omega u^{\frac{q}{2}} \nabla u^{\frac{q}{2}} \cdot \nabla v \notag 
  = -c_3 \int_\Omega u^q \Delta v \notag \\
  &= -c_3 \int_\Omega u^q(v-u) \leq c_3 \int_\Omega u^{q+1},  
\end{align}
where $c_3$ is positive depending only on $q$. By Young inequality, one can find $c_4=c_4(q)>0$, and $c_5=c_5(q,|\Omega|)>0$ such that
\begin{align}\label{5.Pop.proof.3}
    I+K +\phi \leq c_4\int_\Omega u^{q+1} +c_5.
\end{align}
We make use of Lemma \ref{ine-GN} to obtain that there exist $C>0$ such that for any $\epsilon>0$, there exists $c_6 =c_6(\epsilon)>0$ such that
\begin{align*}
    c_5 \int_\Omega u^{q+1} \leq \epsilon \int_\Omega |\nabla u^{\frac{q}{2}}|^2 \left ( \int_\Omega G(u^{\frac{n}{2}}) \right )^{\frac{2}{n}} +C \left ( \int_\Omega u \right )^{q+1}+c_6\int_\Omega u 
\end{align*}
This, together with the uniform
 bounded condition of $ \int_\Omega G(u^{\frac{n}{2}}(\cdot ,t))$ imply that
 \begin{align}\label{5.Pop.proof.4}
      c_4 \int_\Omega u^{q+1} \leq c_7\epsilon\int_\Omega |\nabla u^{\frac{q}{2}}|^2 +c_8,
 \end{align}
 where $c_7$ is positive independent of $\epsilon$ and $c_8=c_8(\epsilon)>0$. From \eqref{5.Pop.proof.1} to \eqref{5.Pop.proof.4}, we obtain that 
 \begin{align}
     \phi'(t)+\phi(t) \leq (c_7\epsilon-c_1)\int_\Omega |\nabla u^{\frac{q}{2}}|^2 +c_9,
 \end{align}
 where $c_9=c_5+c_8$. The proof is now completed by choosing $\epsilon <\frac{c_1}{c_7}$ and applying Gronwall's inequality.
\end{proof}
We are now ready to prove the main result.
\begin{proof}[Proof of Theorem \ref{5.thm.paper3}]
    From Lemma \ref{5.prioriest.lm.paper3}, we obtain that there exists $C_1>0$ such that 
    \[
   \sup_{t \in (0,T_{\rm max})} \int_\Omega G(u(\cdot,t)) \leq C_1,
    \]
 where $G(s):= s\ln(\ln(s+e))$, satisfying all conditions of Proposition \ref{5.Pop.paper3}. Therefore, we can apply Proposition \ref{5.Pop.paper3} to deduce that for any $q>1$ there exists $C_2=C_2(q)>0$ such that
 \[
 \sup_{t\in (0,T_{\rm max})} \int_\Omega u^q (\cdot,t) \leq C_2.
 \]
 This, together with the second equation and elliptic regularity theory imply that 
 \[
  \sup_{t\in (0,T_{\rm max})} \int_\Omega |\nabla v (\cdot,t)|^{2q} \leq C_3,
 \]
 for some $C_3=C_3(q)>0$. By applying Moser iteration procedure as in \cite{Alikakos1}, \cite{Alikakos2}, and \cite{Winkler-2011}, we obtain that 
 \begin{align*}
     \sup_{t\in (0,T_{\rm max })} \left \| u \right \|_{L^\infty(\Omega)} +\left \| v \right \|_{W^{1,\infty}(\Omega)} <\infty.
 \end{align*}
 This, combined with \eqref{local-existence-extend}, implies that $T_{\rm max} =  \infty$ and uniform boundedness of $(u,v)$.
\end{proof}

\section*{Acknowledgement}
The author is indebted to Professor Michael Winkler for his kindly assistance in providing insightful comments, suggestions and valuable references. Additionally, the author extends appreciation to Professor Zhengfang Zhou for thoroughly reviewing the manuscript, engaging in fruitful discussions, and bringing to attention certain errors throughout the course of this project.
\printbibliography

@article{Biler+Hebisch,
author = { Biler P, Hebisch W, Nadzieja T.},
year = {1994},
month = {},
pages = {1189-1209},
title = {The Debye system: existence and large time behavior of solutions},
volume = {23},
journal = {Nonlinear Analysis, Theory, Methods, and Applications},
}

@article{Winkler-2011,
author = {Tao, Y. and Winkler, M.},
year = {2011},
month = {06},
pages = {},
title = {Boundedness in a quasilinear parabolic-parabolic Keller-Segel system
with subcritical sensitivity},
volume = {252},
journal = {Journal of Differential Equations},
}

@article{Tian1,
author = {Tian Xiang},
year = {2017},
month = {12},
pages = {},
title = {Sub-logistic source can prevent blow-up in the 2D minimal Keller-Segel chemotaxis system},
volume = {59},
journal = {Journal of Mathematical Physics},
}

@article{Alikakos1,
author = {Alikakos, N.D.},
year = {1979},
month = {},
pages = {827–868},
title = { Lp bounds of solutions of reaction-diffusion equations},
volume = {4},
journal = {Comm.
Partial Differential Equations},
}

@article{Alikakos2,
author = {Alikakos, N.D.},
year = {1979},
month = {},
pages = {201-225},
title = { An application of the invariance principle to reaction diffusion
equations},
volume = {33},
journal = {J. Differential Equations },
}

@article{Winkler-2010,
author = {Winkler, M.},
year = {2010},
month = {},
pages = {2889–2905},
title = { Aggregation vs. global diffusive behavior in the higher-dimensional
Keller–Segel model},
volume = {248(12)},
journal = {J. Differential Equations},
}

@article{Winkler-Horstmann,
author = {Horstmann, D. and Winkler, M.},
year = {2005},
month = {},
pages = {52-107},
title = {Boundedness vs. blow-up in a chemotaxis system},
volume = {215},
journal = {J. Differential Equations},
}

@article{OTYM,
author = {K. Osaki and T. Tsujikawa and A. Yagi and M. Mimura},
year = {2002},
month = {},
pages = {119-144},
title = {Exponential attractor for a chemotaxis-growth system of equations},
volume = {51},
journal = {Nonlinear Analysis},
}

@article{MW2022,
author = {M. Winkler},
year = {2022},
month = {},
pages = { },
title = {A result on parabolic gradient regularity in Orlicz spaces
and application to absorption-induced blow-up prevention
in a Keller-Segel type cross-diffusion system
},
volume = { 1},
journal = {Preprint},
}

@article{Tello+Winkler,
author = {J. Tello and M. Winkler},
year = {2007},
month = {},
pages = {849-877},
title = {A chemotaxis system with logistic source,
},
volume = {32},
journal = {Comm. Partial Differential
Equations},
}

@article{Lankeit-2017,
author = {Johannes Lankeit},
year = {2017},
month = {},
pages = {4052-4048},
title = {Locally bounded global solutions to a chemotaxis consumption model with singular sensitivity and nonlinear diffusion,
},
volume = {262},
journal = {Journal of Differential Equations},
}

@article{NSY,
author = {Toshitaka Nagai and Takasi Senba and Kiyoshi Yoshida},
year = {1997},
month = {},
pages = {411-433},
title = {Application of the Trudinger-Moser inequality to a Parabolic System of Chemotaxis,
},
volume = {40},
journal = {Funkcilaj Ekvacioj},
}

@article{Cao,
author = {Xinru Cao},
year = {2018},
month = {},
pages = {},
title = {An interpolation inequality and its application in Keller-Segel model
},
volume = {},
journal = {	arXiv:1707.09235 },
}

@article{Li+Lankeit,
author = {Y. Li and J. Lankeit },
year = {2016},
month = {},
pages = {1564-1595},
title = {Boundedness in a chemotaxis-haptotaxis model with nonlinear diffusion
},
volume = {29(6)},
journal = {Nonlinearity.},
}
\end{document}